\theoremstyle{theorem}
\newtheorem{theorem}{Theorem}
\newtheorem{lemma}{Lemma}
\newtheorem{prop}{Proposition}
\theoremstyle{remark}
\newtheorem*{remark*}{Remark}
\numberwithin{equation}{section}
\renewcommand{\d}{\mathop{}\mathopen{}\mathrm{d}}
\newcommand{\R}{\mathbb{R}}
\newcommand\gd{\mathrm{geo}d}
\begin{document}
\author{Antonin Monteil, Filippo Santambrogio}
\title{Metric methods for heteroclinic connections}
\address{Laboratoire de Math\'ematiques d'Orsay, Univ. Paris-Sud, CNRS, Universit\'e Paris-Saclay, 91405 Orsay Cedex, France,\\ {\tt antonin.monteil@math.u-psud.fr, filippo.santambrogio@math.u-psud.fr} }
\maketitle

\begin{abstract}We consider the problem $\min\int_\R \frac{1}{2}|\dot{\gamma}|^2+W(\gamma)\d t $ among curves connecting two given wells of $W\geq 0$ and we reduce it, following a standard method, to a geodesic problem of the form $\min\int_0^1 K(\gamma)|\dot{\gamma}|\d t$ with $K=\sqrt{2W}$. We then prove existence of curves minimizing this new action just by proving that the distance induced by $K$ is proper (i.e. its closed balls are compact). The assumptions on $W$ are minimal, and the method seems robust enough to be applied in the future to some PDE problems.
    \end{abstract}

\section{Introduction}

The minimization of an energy such as
\begin{equation}\label{het1}
\left(\gamma:I\to\R^d\right)\mapsto \int_I\left( \frac{1}{2}|\dot{\gamma}|^2(t)+W(\gamma(t))\right)\d t 
\end{equation}
is a very common problem in many mathematical issues, first of all because of its meaning in classical mechanics (where it corresponds to kinetic  + potential energy). The corresponding Euler-Lagrange equation $\gamma''=\nabla W(\gamma)$ represents the simplest example of motion according to the Newton's law where the force producing the acceleration is of gravitational type. 
The same minimization problem and the same ODE also appear in other issues, for instance in phase transition models, where a suitable rescaling of the curve $\gamma$ gives the optimal transition between two states (we refer for instance to \cite{Braides} for a general introduction to this field). For many applications, the case where $I=\R$, $W\geq 0$ and $\gamma$ connects two wells of $W$ (i.e. $\gamma(\pm\infty)=x^\pm$ with $W(x^\pm)=0$) is the most interesting one. The optimal curve $\gamma$ is called a heteroclinic connection (in contrast with the homoclinic connections, which are solutions of $\gamma''=\nabla W(\gamma)$ but with same limits at $\pm \infty$).

The existence of a heteroclinic connection is a delicate problem, because of the lack of compactness of the set $H^1(\R)$ and of the invariance by translations of the action to be minimized. Many ways to overcome this problem have been proposed, under suitable assumptions on $W$ (on its degeneracy or radial monotonicity near the wells, for instance). We cite \cite{SteRocky} as a first analysis of this problem, and many more recent papers, in particular \cite{Ali Fus,AntSmy,Sourdis}. This last paper, \cite{Sourdis}, is the one with the most general result, as it removes the monotonicty assumptions of \cite{Ali Fus} around the wells. In \cite{Sourdis} there is the assumption $\liminf_{|x|\to\infty}W(x)>0$, but it is easy to see that it can weakened into something like $\sqrt{W(x)}\geq k(|x|)$ with $\int_0^\infty k(t)dt=+\infty$, as we do in this paper. Note that \cite{Ali Fus} already used a similar assumption, in the form $\liminf_{|x|\to\infty}|x|^2W(x)=+\infty$, but ours is weaker, and optimal (it is easy to build example of cases where the minimum is not attained without it). 

The idea behind the method that we propose here, very much different from \cite{Ali Fus,Sourdis}, is classical: reduce the problem to a geodesic problem for a weighted metric with a cost given by $K(x):=\sqrt{2 W(x)}$, i.e., instead of minimizing \eqref{het1}, solving
$$
\min \int_0^1\left( \sqrt{2W(\gamma(t))}\, |\gamma'(t)|\right)\d t 
$$
with given initial and final data. The difficulty in this problem is the fact that $K$ is not bounded from below, which makes it difficult to obtain bounds on a minimizing sequence. Instead, we propose an abstract metric approach: we show that the distance $d_K$ induced by the weight $K$ makes $\R^d$ a proper space, which automatically means that it admits the existence of geodesics.

We present our approach in the framework of a general metric space $X$ instead of $\R^d$ in order to prepare possible later extensions to higher dimensional problems, i.e. attacking
$$
\min \int_{\R\times I}\left( \frac{1}{2}|\nabla u|^2(x)+W(u(x))\right)\d x 
$$
where $x=(x_1,x_2)$, and boundary data are fixed as $x_1\to\pm\infty$. This can be interpreted in our framework using $x_1$ as $t$ and $X$ to be $L^2(I)$, with an effective potential of the form $u\mapsto \int_I \frac{1}{2}|\partial_{x_2} u|(x_2)^2+W(u(x_2))\d x_2$. But this obviously raises extra difficulties due to the lack of compactness in infinite dimensions.

The paper is organised as follows: first we recall the main notions concerning curves and geodesics in metric spaces, then we consider the problem of minimizing a weighted length in a metric space, with a weight $K$ which can possibly vanish, then we apply this result to the problem of heteroclinic connections.


\section{Minimal length problem in metric spaces}

Let $(X,d)$ be a metric space, a standard situation being $X=\R^d$ endowed with the Euclidean distance.

\paragraph{\bf Curve in $(X,d)$} A \emph{curve} is a continuous map $\gamma:I\to X$, where $I\subset \R$ is a non-empty interval. 
We denote the set of Lipschitz maps (resp. locally Lipschitz maps) from $I$ to $X$ by $\mathcal{L}(I,X)$ (resp. $\mathcal{L}_{loc}(I,X)$). We also need to introduce the set of \emph{piecewise locally Lipschitz maps}:
\[
\mathcal{L}_{ploc}(I,X):=\big\{\gamma\in \mathcal{C}(I,X)\;:\;\exists t_0=\inf I<t_1<\dots <t_n=\sup I,\, \forall i,\,\gamma\in\mathcal{L}_{loc}(I\cap (t_i,t_{i+1}))\big\}.
\]

\paragraph{\bf Length of a curve}
Given any curve $\gamma:I\to X$, we define the length of $\gamma$ by the usual formula
$$L_d(\gamma):=\sup \sum_{i=0}^{N-1} d(\gamma(t_i),\gamma(t_{i+1}))\in\R\cup\{+\infty\},$$
where the supremum is taken over all $N\geq 1$ and all sequences $t_0\leq \dots\leq t_N$ in $I$. A curve $\gamma$ is said to be \emph{rectifiable} if $L(\gamma)<\infty$.

\paragraph{\bf Length of locally Lipschitz curves} For piecewise locally Lipschitz maps we have the following representation formula for the length: 
\begin{prop}\label{length_lip}
Given $\gamma\in\mathcal{L}_{ploc}(I,X)$, the following quantity,
\[
|\dot{\gamma}|(t)=\lim\limits_{s\to t}\frac{d(\gamma(t),\gamma(s))}{|t-s|},
\]
is well defined for a.e. $t\in I$ and measurable. $|\dot{\gamma}|$ is called \emph{metric derivative} of $\gamma$. Moreover, one has
\[
L_d(\gamma)=\int_I |\dot{\gamma}|(t)\d t .
\]
\end{prop}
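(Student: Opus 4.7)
\smallskip

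\textbf{Plan.} The statement is the classical Ambrosio representation of length for Lipschitz curves valued in a metric space, extended in a routine way to piecewise locally Lipschitz maps. I would proceed in three stages.

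First, I would reduce to the case $\gamma\in\mathcal{L}(I,X)$ with $I$ compact. Indeed, if $\gamma\in\mathcal{L}_{ploc}(I,X)$ with breakpoints $t_0<\dots<t_n$, then on each open subinterval $\gamma$ is locally Lipschitz and can be exhausted by compact subintervals $J_k^i\nearrow (t_i,t_{i+1})$ on which $\gamma$ is Lipschitz. Since length is additive with respect to concatenation and monotone under exhaustions (both facts follow directly from the supremum definition of $L_d$), once the claim is established on each $J_k^i$ one recovers it on $I$ by letting $k\to\infty$ and summing over $i$.

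Second, assume $\gamma\in\mathrm{Lip}(I,X)$ with $I$ compact. The image $\gamma(I)$ is separable, so pick a countable dense set $\{x_n\}\subset\gamma(I)$ and set $\varphi_n(t):=d(\gamma(t),x_n)$. Each $\varphi_n$ is Lipschitz on $I\subset\R$, hence a.e. differentiable, and
\[
m(t):=\sup_{n}|\varphi_n'(t)|
\]
is measurable and bounded by $\mathrm{Lip}(\gamma)$. For $s<t$ the triangle inequality gives $|\varphi_n(t)-\varphi_n(s)|\leq d(\gamma(t),\gamma(s))$, so $|\varphi_n'(t)|\leq \liminf_{s\to t}d(\gamma(t),\gamma(s))/|t-s|$ at every differentiability point, whence $m(t)\leq \liminf_{s\to t}d(\gamma(t),\gamma(s))/|t-s|$ a.e. In the opposite direction, density of $\{x_n\}$ in $\gamma(I)$ yields $d(\gamma(t),\gamma(s))=\sup_n|\varphi_n(t)-\varphi_n(s)|$, and since each $\varphi_n$ is absolutely continuous,
\[
d(\gamma(t),\gamma(s))=\sup_n\Bigl|\int_s^t\varphi_n'(r)\,\d r\Bigr|\leq \int_s^t m(r)\,\d r.
\]
Dividing by $t-s$ and letting $s\to t$ at any Lebesgue point of $m$ shows $\limsup_{s\to t}d(\gamma(t),\gamma(s))/|t-s|\leq m(t)$. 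Combining the two inequalities proves that the metric derivative $|\dot\gamma|(t)$ exists, equals $m(t)$ a.e., and is therefore measurable.

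Third, the length identity. The upper bound is immediate: for any subdivision $t_0\leq\dots\leq t_N$,
\[
\sum_i d(\gamma(t_i),\gamma(t_{i+1}))\leq \sum_i\int_{t_i}^{t_{i+1}} m(r)\,\d r=\int_I m(r)\,\d r,
\]
so $L_d(\gamma)\leq\int_I|\dot\gamma|\,\d t$. For the converse, which is the only mildly delicate step, I would approximate $\int_I|\dot\gamma|$ by Riemann-type sums: for the uniform partition $t_i^N=a+i(b-a)/N$ define $f_N(t):=\frac{N}{b-a}\,d(\gamma(t_i^N),\gamma(t_{i+1}^N))$ on $[t_i^N,t_{i+1}^N)$. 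Each $f_N$ is bounded by $\mathrm{Lip}(\gamma)$, so dominated convergence applies, and by the existence of the metric derivative at a.e. $t$ one shows $f_N(t)\to|\dot\gamma|(t)$ a.e.\ (the only care needed is handling the right endpoint of each subinterval, which is a null set in the limit). Therefore
\[
\int_I|\dot\gamma|\,\d t=\lim_{N}\int_I f_N=\lim_N\sum_i d(\gamma(t_i^N),\gamma(t_{i+1}^N))\leq L_d(\gamma).
\]
The main obstacle is really this last pointwise convergence of $f_N$: it uses that the two endpoints surrounding $t$ straddle $t$ with widths tending to zero, which does follow from the definition of $|\dot\gamma|(t)$ as a two-sided limit.
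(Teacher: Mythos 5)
The paper offers no proof of this proposition: it is stated as a classical fact and the reader is referred to Ambrosio--Tilli \cite{AmbTil}. Your argument is essentially the standard proof from that reference (reduction to compact Lipschitz pieces, the auxiliary functions $\varphi_n(t)=d(\gamma(t),x_n)$, and the identification $|\dot\gamma|=\sup_n|\varphi_n'|$ a.e.), and Stages 1 and 2 are correct as written.

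The one step that is not justified as stated is the pointwise convergence $f_N(t)\to|\dot\gamma|(t)$ in Stage 3. You assert that it ``follows from the definition of $|\dot\gamma|(t)$ as a two-sided limit'', but that definition pins one endpoint of the difference quotient at $t$, whereas $f_N(t)$ is a straddling quotient $d(\gamma(a_N),\gamma(b_N))/(b_N-a_N)$ with $a_N\leq t< b_N$. The triangle inequality only bounds $d(\gamma(a_N),\gamma(b_N))/(b_N-a_N)$ from above by a convex combination of $d(\gamma(a_N),\gamma(t))/(t-a_N)$ and $d(\gamma(t),\gamma(b_N))/(b_N-t)$, which yields $\limsup_N f_N(t)\leq|\dot\gamma|(t)$ --- the wrong direction for the lower bound on the length: in a general metric space $d(\gamma(a_N),\gamma(b_N))$ could a priori be much smaller than the sum of the two legs, so no lower bound on $f_N$ comes from the definition of the metric derivative alone. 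The claim is nevertheless true, and the repair is available from your own Stage 2: since $d(\gamma(a_N),\gamma(b_N))\geq|\varphi_n(b_N)-\varphi_n(a_N)|$ and each $\varphi_n$ is a real-valued Lipschitz function differentiable at a.e. $t$, the elementary straddling lemma for real functions (write the quotient as a convex combination of the two one-sided quotients, both of which converge to $\varphi_n'(t)$) gives $|\varphi_n(b_N)-\varphi_n(a_N)|/(b_N-a_N)\to|\varphi_n'(t)|$, whence $\liminf_N f_N(t)\geq\sup_n|\varphi_n'(t)|=|\dot\gamma|(t)$ a.e.; Fatou then closes the argument, and dominated convergence is not even needed. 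Alternatively, the classical route avoids straddling quotients altogether by using the one-sided quotients $g_h(t)=d(\gamma(t+h),\gamma(t))/h$, which do converge a.e. by the very definition of the metric derivative, together with the averaging estimate $\int_a^{b-h}g_h(t)\d t\leq L_d(\gamma)$ obtained by Fubini over translated partitions.
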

We refer for instance to \cite{AmbTil} for the notion of metric derivative and for many other notions on the analysis of metric spaces.

\paragraph{\bf Parametrization}
If $\gamma:I\to X$ is a curve, and $\varphi:I'\to I$ is a non-decreasing surjective continuous mapping, called \emph{parametrization}, then the curve $\sigma=\gamma\circ\varphi :I'\to X$ satisfies $L_d(\sigma)=L_d(\gamma)$. The curve $\gamma$ is said to have \emph{constant speed} if for all $t,t'\in I$ s.t. $t<t'$, $L_d(\gamma_{|(t,t')})=\lambda |t-t'|$. $\lambda$ is the \emph{speed} of the curve $\gamma$. Note that $\gamma$ has constant speed $\lambda$ if and only if $\gamma$ is Lipschitz and $|\dot{\gamma}(t)|=\lambda$ a.e. 
The curve $\gamma$ is {parametrized by arc length} if $\lambda=1$. Assume that a curve $\gamma$ satisfies $L_d(\gamma_{|J})<\infty$ for all compact subset $J\subset I$: then, it is well-known that there exists a reparamatrization of $\gamma$ parametrized by arc length. 
Up to renormalization, it is always possible to consider curves defined on $I=[0,1]$.

\paragraph{\bf Minimal length problem} We define the intrinsic pseudo-metric $\gd$ (called {\it geodesic distance}) by minimizing the length of all curves $\gamma$ connecting two points $x^\pm\in X$:
\begin{equation}
\label{intrinsic}\gd(x^-,x^+):=\inf\{L_d(\gamma)\;:\; \gamma:x^-\mapsto x^+\}\in [0,+\infty],
\end{equation}
where the notation $\gamma:x^-\mapsto x^+$ means that $\gamma$ is a \emph{path} from $x^-$ to $x^+$: there exists $a^-\leq a^+$ s.t. $\gamma\in\mathcal{C}^0([a^-,a^+],X)$ with $\gamma(a^\pm)=x^\pm$. Here, if $a^+$ or $a^-$ is infinite, we use the convention $\gamma(\pm\infty):=\lim_{t\to \pm\infty}\gamma (t)=x^\pm$, if the limit exists. 

When $(X,d)$ is a Euclidean space, $\gd=d$ and the infimum value in \eqref{intrinsic} is achieved by the segment $[a^-,a^+]$. In general, a metric space such that $\gd=d$ is called $\emph{length space}$.

The minimal length problem consists in finding a curve $\gamma:x^-\mapsto x^+$ such that $L_d(\gamma)=\gd(x^-,x^+)$. The existence of such a curve, called \emph{minimizing geodesic}, is given by the classical theorem (see \cite{AmbTil}, for instance):
\begin{theorem}\label{exist_geod}
Assume that $(X,d)$ is proper, i.e. every bounded closed subset of $(X,d)$ is compact. Then, for any two points $x^\pm$ such that $\gd(x^+,x^-)<+\infty$, there exists a minimizing geodesic joining $x^-$ and $x^+$.
\end{theorem}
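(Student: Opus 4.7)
The plan is to run the direct method: take a minimizing sequence $(\gamma_n)$ for \eqref{intrinsic}, reparametrize so as to gain equi-Lipschitz continuity, use properness to confine the curves in a common compact set, extract a uniformly convergent subsequence via Arzelà--Ascoli, and then conclude by lower semicontinuity of the length with respect to pointwise convergence.

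More precisely, I would first pick curves $\gamma_n:[a_n^-,a_n^+]\to X$ with $\gamma_n(a_n^\pm)=x^\pm$ and $L_d(\gamma_n)\to \gd(x^-,x^+)=:\ell<+\infty$. After an affine reparametrization, I may assume all curves are defined on $[0,1]$, and, as recalled just after Proposition~\ref{length_lip}, each $\gamma_n$ admits a constant-speed reparametrization; I keep the same notation $\gamma_n$ for this new curve, which is then Lipschitz with constant $L_d(\gamma_n)\leq \ell+1$ for $n$ large. In particular, the family $\{\gamma_n\}$ is uniformly equicontinuous on $[0,1]$.

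Next, I observe that for every $t\in[0,1]$, $d(\gamma_n(t),x^-)\leq L_d(\gamma_n)\leq \ell+1$, so the images of all $\gamma_n$ lie in the closed ball $\overline{B}(x^-,\ell+1)$. By properness, this ball is compact. Thus Arzelà--Ascoli applies and, up to a subsequence, $\gamma_n\to\gamma$ uniformly on $[0,1]$ for some continuous curve $\gamma:[0,1]\to X$ with $\gamma(0)=x^-$ and $\gamma(1)=x^+$.

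It remains to check the inequality $L_d(\gamma)\leq \liminf_n L_d(\gamma_n)$. This is the standard lower semicontinuity of length under pointwise convergence: for any partition $0=t_0<\dots<t_N=1$, the continuity of $d$ gives
\[
\sum_{i=0}^{N-1} d(\gamma(t_i),\gamma(t_{i+1}))=\lim_{n\to\infty}\sum_{i=0}^{N-1} d(\gamma_n(t_i),\gamma_n(t_{i+1}))\leq \liminf_{n\to\infty}L_d(\gamma_n),
\]
and taking the supremum over partitions yields the claim. Combined with the minimizing property of $(\gamma_n)$, we obtain $L_d(\gamma)\leq \ell$, and since $\gamma$ connects $x^-$ to $x^+$ the reverse inequality is immediate, so $\gamma$ is a minimizing geodesic. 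The main point to watch is the reparametrization step: one must ensure that the sequence can be put on a common interval with a uniform Lipschitz bound; everything else is then routine, the lower semicontinuity being automatic from the very definition of $L_d$ as a supremum of continuous quantities.
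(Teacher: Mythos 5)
Your proof is correct and is exactly the standard direct-method argument (constant-speed reparametrization onto $[0,1]$, confinement in a compact ball via properness, Arzel\`a--Ascoli, lower semicontinuity of $L_d$ as a supremum) that the paper itself does not spell out but delegates to the reference \cite{AmbTil}. No gaps: the only point needing care, the uniform Lipschitz bound after reparametrization, is handled correctly since a minimizing sequence is eventually rectifiable and can be put in constant-speed form with speed $L_d(\gamma_n)\leq \ell+1$.
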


\section{Minimal length problem in weighted metric spaces}

Let $(X,d)$ be a metric space and $K: X\to \R^+$ be a nonnegative function called \emph{weight function}. From now on, we make the following assumptions on $(X,d,K)$:
\begin{description}
\item[(H1)]
$(X,d)$ is a proper length metric space. 
\item[(H2)]
$K$ is continuous and $\Sigma:=\{K=0\}$ is finite.
\item[(H3)]
For all $x\in X$, $K(x)\geq k(d(x,\Sigma))$ for some function $k\in C^0(\R^+,\R^+)$ with $\int_0^\infty k(t)\d t=+\infty$.
\end{description}
Assumption {\bf (H1)} is satisfied in particular by any Euclidean space. The confining property {\bf (H3)} is fulfilled whenever $\liminf_{d(x_0,x)\to\infty} K(x)>0$ for instance.

Our aim is to investigate the existence of a curve $\gamma \in\mathcal{L}_{ploc}(I,X)$ minimizing the $K$-length, defined by
\[
L_K(\gamma):=\int_I K(\gamma(t))\, |\dot{\gamma}(t)|\d t .
\]
Namely, we want to find a curve $\gamma \in\mathcal{L}_{ploc}(I,X)$ which minimizes the $K$-length between given points $x^\pm\in X$:
\[
d_K(x^-,x^+):=\inf\{L_K(\gamma)\;:\; \gamma\in\mathcal{L}_{ploc}(I,X)\text{ s.t. }\gamma:x^-\mapsto x^+\}.
\]
We are going to prove that $d_K$ is a metric on $X$ s.t. $(X,d_K)$ is proper and $L_K=L_{d_K}$, thus implying the existence of a geodesic between two joinable points, in view of Theorem \ref{exist_geod} (see Theorem \ref{exist_Kgeod} below).
\begin{prop}\label{topo_dK}
The quantity $d_K$ is a metric on $X$. Moreover $(X,d_K)$ enjoys the following properties
\begin{enumerate}
\item
$d_K$ and $d$ are equivalent ( i.e. they induce the same topology) on all $d$-compact subsets of $X$.
\item
$(X,d_K)$ is a proper metric space.
\item
Any locally Lipschitz curve $\gamma:I\to X$ is also $d_K$-locally Lipschitz and the metric derivative of $\gamma$ in $(X,d_K)$, denoted by $|\dot{\gamma}|_K$, is given by $|\dot{\gamma}|_K(t)=K(\gamma(t))\, |\dot{\gamma}|(t)$ a.e.
\item
We have $L_K(\gamma)=L_{d_K}(\gamma)$ for all $\gamma\in\mathcal{L}_{ploc}(I,X)$. 
\end{enumerate}
\end{prop}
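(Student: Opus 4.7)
The plan is to handle the four items in the order (1), (3), (4), (2); the metric property of $d_K$ will fall out of (1). Symmetry, non-negativity, the triangle inequality and $d_K(x,x)=0$ are immediate from the definition, so only the positivity $d_K(x,y)>0$ for $x\neq y$ has to be checked. The real obstacle will be item (2): properness is the only property that genuinely uses the integral condition in (H3), and it is the geometric heart of the statement.

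For (1), I would establish both directions of topological equivalence on a fixed $d$-compact set $C$ separately. For the estimate $d_K\lesssim d$, take a slightly enlarged $d$-compact set $\widetilde C$ (available thanks to (H1)), on which $K$ is bounded by some $M>0$ by (H2); since $(X,d)$ is a length space, two nearby points of $C$ can be joined by a curve in $\widetilde C$ of $d$-length at most $2d(x,y)$, so $d_K(x,y)\leq 2M\,d(x,y)$ for $d(x,y)$ small. The converse direction uses the finiteness of $\Sigma$: given $x\in C$, pick $r_0>0$ such that $\overline{B}_d(x,r_0)$ meets $\Sigma$ only at $x$; for $0<a<b\leq r_0$ the annulus $A_{a,b}:=\{a\leq d(\cdot,x)\leq b\}$ is $d$-compact and avoids $\Sigma$, so by (H2) one has $K\geq\delta(a,b)>0$ on it, and any curve from $x$ to a point $y$ with $d(x,y)\geq b$ must contain a sub-arc traversing $A_{a,b}$, contributing at least $\delta(a,b)(b-a)$ to its $L_K$. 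Choosing $0<a<b<d(x,y)$ gives $d_K(x,y)>0$ (so $d_K$ is a metric), while fixing $a,b$ small but positive shows that $d_K(x_n,x)\to 0$ forces $d(x_n,x)\to 0$, completing the topological equivalence.

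Item (3) is a standard metric-derivative computation. Local $d_K$-Lipschitz regularity of a locally $d$-Lipschitz $\gamma$ follows from the local upper bound on $K\circ\gamma$, so $|\dot\gamma|_K$ exists a.e.\ by Proposition~\ref{length_lip}. The bound $|\dot\gamma|_K\leq K(\gamma)|\dot\gamma|$ is immediate by using $\gamma|_{[t_0,s]}$ as competitor and Lebesgue differentiation. The matching lower bound at a point $t_0$ with $K(\gamma(t_0))>0$ comes from localizing to a $d$-ball $U$ where $K\geq K(\gamma(t_0))-\varepsilon$ and noting that, for $s$ close to $t_0$, any competitor from $\gamma(t_0)$ to $\gamma(s)$ either stays in $U$---in which case $L_K\geq(K(\gamma(t_0))-\varepsilon)d(\gamma(t_0),\gamma(s))$---or leaves $U$, which is more expensive because $d(\gamma(t_0),\gamma(s))$ is smaller than the radius of $U$. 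The case $K(\gamma(t_0))=0$ is automatic from the upper bound. Once (3) is proved, (4) is a one-line consequence of Proposition~\ref{length_lip} applied in $(X,d_K)$: $L_{d_K}(\gamma)=\int_I|\dot\gamma|_K\,dt=\int_I K(\gamma(t))|\dot\gamma|(t)\,dt=L_K(\gamma)$, extended additively to the piecewise case.

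The main point is (2). Set $F(r):=\int_0^r k(s)\,ds$, which goes to $+\infty$ by (H3). The idea is to control how $d(\cdot,\Sigma)$ evolves along a curve, using the weight $K$ as a comparison. For a Lipschitz curve $\gamma$ parametrized by $d$-arc length, the function $\varphi(t):=d(\gamma(t),\Sigma)$ is $1$-Lipschitz, so $F\circ\varphi$ is absolutely continuous with $|(F\circ\varphi)'(t)|\leq k(\varphi(t))\leq K(\gamma(t))$ by (H3). Integrating and passing to the infimum over curves yields the key estimate
\[
|F(d(y,\Sigma))-F(d(x,\Sigma))|\leq d_K(x,y)\qquad\text{for all }x,y\in X.
\]
Since $F(r)\to+\infty$ and $\Sigma$ is finite (hence bounded), this forces every $d_K$-bounded set $A$ to be $d$-bounded, so $\overline A^d$ is $d$-compact by (H1). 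By (1), $d$ and $d_K$ induce the same topology on $\overline A^d$, so $\overline A^d$ is $d_K$-compact and in particular $d_K$-closed in $X$. Any $d_K$-closed, $d_K$-bounded set is therefore a $d_K$-closed subset of a $d_K$-compact set, hence itself $d_K$-compact, which proves that $(X,d_K)$ is proper.
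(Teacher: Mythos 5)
Your proof is correct and follows essentially the same route as the paper: the annulus-crossing argument for positivity and for comparing the two topologies, the two-sided local comparison of $d_K$ with $K\,d$ to compute the metric derivative, and above all the key properness estimate obtained by composing the antiderivative of $k$ with $d(\cdot,\Sigma)$ along a near-optimal curve. The only cosmetic difference is in item (1), where the paper checks a single direction of continuity and invokes the fact that a continuous injection of a compact space is a homeomorphism, whereas you verify both directions directly; both arguments are fine.
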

\begin{theorem}\label{exist_Kgeod}
For any $x,y\in X$, there exists $\gamma\in \mathcal{L}_{ploc}(I,X)$ s.t. $L_K(\gamma)=d_K(x,y)$ and $\gamma:x\mapsto y$.
\end{theorem}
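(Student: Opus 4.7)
The plan is to apply Theorem \ref{exist_geod} to the new metric space $(X, d_K)$, which is proper by Proposition \ref{topo_dK}(2), and then to promote the resulting continuous geodesic to an element of $\mathcal{L}_{ploc}$.

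First I would check that $(X, d_K)$ is a length space, i.e.\ its induced geodesic distance equals $d_K$ itself. The sup-of-sums definition of $L_{d_K}$, restricted to the trivial two-point partition, gives $L_{d_K}(\sigma) \geq d_K(\sigma(a^-), \sigma(a^+))$ on every continuous path, so the $d_K$-geodesic distance is at least $d_K$; conversely, Proposition \ref{topo_dK}(4) yields $L_{d_K} = L_K$ on $\mathcal{L}_{ploc}$, and these paths are continuous, so the $d_K$-geodesic distance is at most $d_K$. When $d_K(x,y) < +\infty$, Theorem \ref{exist_geod} produces a continuous minimizing $d_K$-geodesic which I reparametrize by $d_K$-arclength into $\gamma : [0, L] \to X$, with $L := d_K(x,y)$, satisfying the isometry $d_K(\gamma(s), \gamma(t)) = |s-t|$. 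The edge case $d_K(x,y) = +\infty$ is handled trivially by a $d$-geodesic from $x$ to $y$ in $(X,d)$ (Theorem \ref{exist_geod}): such a curve is $d$-Lipschitz, hence in $\mathcal{L}_{ploc}$, and its $K$-length is necessarily $+\infty$.

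The isometry forces $\gamma$ to be injective on $[0,L]$, so $\gamma^{-1}(\Sigma)$ is finite; together with $0$ and $L$ it provides the partition required by the $\mathcal{L}_{ploc}$ definition. The main task, which I expect to be the hardest step, is to show that $\gamma$ is $d$-locally Lipschitz on each open subinterval $(t_i, t_{i+1})$ between consecutive partition points. Fix any compact $J \subset (t_i, t_{i+1})$: $\gamma(J)$ is $d$-compact by Proposition \ref{topo_dK}(1), lies at some positive $d$-distance $\delta > 0$ from $\Sigma$, and on the tube $\overline{U} := \{z \in X : d(z, \gamma(J)) \leq \delta/2\}$ (compact by properness of $(X,d)$) one has $K \geq c > 0$. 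For $s, s' \in J$ and any competing $\alpha \in \mathcal{L}_{ploc}$ joining $\gamma(s)$ to $\gamma(s')$: either $\alpha$ stays in $\overline{U}$, giving $L_K(\alpha) \geq c\, L_d(\alpha) \geq c\, d(\gamma(s), \gamma(s'))$; or $\alpha$ exits $\overline{U}$, in which case its initial piece up to the first exit lies in $\overline{U}$ and has $d$-length at least $\delta/2$, so $L_K(\alpha) \geq c\delta/2$. Taking the infimum over $\alpha$,
\[
|s - s'| = d_K(\gamma(s), \gamma(s')) \geq \min\bigl(c\, d(\gamma(s), \gamma(s')),\, c\delta/2\bigr).
\]
Uniform $d$-continuity of $\gamma$ on $[0,L]$ makes the first term govern once $|s - s'|$ is small enough, yielding $d(\gamma(s), \gamma(s')) \leq c^{-1} |s - s'|$ locally; a standard subdivision argument then upgrades this to a uniform Lipschitz bound on all of $J$.

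Hence $\gamma \in \mathcal{L}_{ploc}([0, L], X)$, and Proposition \ref{topo_dK}(4) gives $L_K(\gamma) = L_{d_K}(\gamma) = L = d_K(x, y)$, finishing the proof.
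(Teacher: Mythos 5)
Your proposal is correct and follows essentially the same route as the paper: properness of $(X,d_K)$ plus Theorem \ref{exist_geod}, arc-length reparametrization and injectivity to isolate the finitely many hits of $\Sigma$, a local positive lower bound on $K$ away from $\Sigma$ to get piecewise local $d$-Lipschitz regularity, and then Statement 4 of Proposition \ref{topo_dK} to identify $L_K$ with $L_{d_K}$. The only differences are that you spell out details the paper leaves implicit (the length-space property of $(X,d_K)$, and a tube argument that in effect reproves the lower bound of Lemma \ref{est_dK}, which the paper simply cites).
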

\begin{proof}
Let us see how Proposition \ref{topo_dK} implies Theorem \ref{exist_Kgeod}. As $(X,d_K)$ is a proper metric space, Theorem \ref{exist_geod} insures the existence of a $L_{d_K}$-minimizing curve $\gamma:x\mapsto y$. Up to renormalization, one can assume that $\gamma$ is parametrized by $L_{d_K}$-arc length. By minimality, we also know that $\gamma$ is injective and thus, $\gamma$ meets the finite set $\{K=0\}$ at finite many instants $t_1<\dots < t_N$. As $K$ is bounded from below by some positive constant on each compact subinterval of $(t_i,t_{i+1})$ for $i\in\{1,\dots,N\}$, Lemma \ref{est_dK} below implies that $\gamma$ is piecewise locally $d$-Lipschitz. Finally, thanks to Statement 4 of Proposition \ref{topo_dK}, the fact that $\gamma$ minimizes $L_{d_K}$ means that it also minimizes $L_K$ among $\mathcal{L}_{ploc}$ curves connecting $x$ to $y$.
\end{proof}
In order to prove Proposition \ref{topo_dK}, we will need the following estimations on $d_K$.
\begin{lemma}\label{est_dK}
For all $x,y\in X$, one has
\[
K_{d(x,y)}(x)\, d(x,y)\leq d_K(x,y)\leq K^{d(x,y)}(x)\, d(x,y),
\]
where $K_r(x)$ and $K^r(x)$ are defined for any $r\geq 0$ and $x\in X$ by 
\[
K_r(x):=\inf\{K(y)\;:\; d(x,y)\leq r\},\quad K^r(x):=\sup\{K(y)\;:\; d(x,y)\leq r\}.
\]
\end{lemma}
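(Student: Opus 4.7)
The plan is to prove the two inequalities separately: the upper bound by running along a minimizing geodesic of $(X,d)$, and the lower bound by truncating an arbitrary competitor at its first exit from the closed ball $\overline{B}(x,d(x,y))$.

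For the upper bound, assumption \textbf{(H1)} makes $(X,d)$ a proper length space, so Theorem~\ref{exist_geod} provides a minimizing geodesic $\gamma:x\mapsto y$ with $L_d(\gamma)=d(x,y)$. Parametrizing it by arc length on $[0,d(x,y)]$, one has $|\dot{\gamma}|\equiv 1$ a.e.\ and $d(x,\gamma(t))\le L_d(\gamma|_{[0,t]})=t\le d(x,y)$ for every $t$, hence $K(\gamma(t))\le K^{d(x,y)}(x)$ along the whole curve. Since $\gamma$ is Lipschitz (in particular in $\mathcal{L}_{ploc}$) and admissible in the definition of $d_K$, integrating gives
$$d_K(x,y)\le L_K(\gamma)=\int_0^{d(x,y)}K(\gamma(t))\d t\le K^{d(x,y)}(x)\cdot d(x,y).$$

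For the lower bound, I take an arbitrary competitor $\gamma\in\mathcal{L}_{ploc}([a^-,a^+],X)$ with $\gamma(a^-)=x$, $\gamma(a^+)=y$ and define the first exit time
$$\tau:=\inf\{t\in[a^-,a^+]\,:\,d(x,\gamma(t))=d(x,y)\}.$$
This set is nonempty (it contains $a^+$), so $\tau$ is well defined; by continuity of $s\mapsto d(x,\gamma(s))$ one has $d(x,\gamma(\tau))=d(x,y)$, and for $t<\tau$ necessarily $d(x,\gamma(t))\le d(x,y)$---otherwise the intermediate value theorem, applied between $s=a^-$ (where the distance is $0$) and the offending $t$ (where it would exceed $d(x,y)$), would produce a time in the set defining $\tau$ strictly before $\tau$, a contradiction. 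Consequently $K(\gamma(t))\ge K_{d(x,y)}(x)$ on $[a^-,\tau]$, and using Proposition~\ref{length_lip} together with the standard bound $L_d(\gamma|_{[a^-,\tau]})\ge d(\gamma(a^-),\gamma(\tau))$,
$$L_K(\gamma)\ge\int_{a^-}^\tau K(\gamma(t))|\dot{\gamma}|(t)\d t\ge K_{d(x,y)}(x)\,L_d(\gamma|_{[a^-,\tau]})\ge K_{d(x,y)}(x)\cdot d(x,y).$$
Taking the infimum over $\gamma$ yields $d_K(x,y)\ge K_{d(x,y)}(x)\cdot d(x,y)$.

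The only genuine subtlety is the lower bound: a competitor may wander far outside $\overline{B}(x,d(x,y))$, where $K$ may be arbitrarily small, so the naive estimate ``$K(\gamma(t))\ge K_{d(x,y)}(x)$ along all of $\gamma$'' fails. The cut-off at the first exit time $\tau$ is exactly what keeps $K$ uniformly bounded below on a sub-curve whose $d$-length is still at least $d(x,y)$.
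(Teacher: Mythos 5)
Your proof is correct, and the overall strategy (two separate inequalities, each controlled by confining the relevant curve to the ball $\overline{B}(x,d(x,y))$) is the paper's. The lower bound is the paper's argument made precise: the authors simply note that any competitor must leave the open ball $B_d(x,d(x,y))$ and therefore accumulates $d$-length at least $d(x,y)$ in a region where $K\geq K_{d(x,y)}(x)$; your first-exit-time $\tau$ is a careful writing of exactly that estimate. (One pedantic caveat: the paper's notion of path allows $a^\pm=\pm\infty$ with $\gamma(\pm\infty)=\lim\gamma$, in which case the set defining $\tau$ could be empty if the curve approaches $y$ from strictly inside the ball; but then the whole curve lies in the closed ball and the same estimate is immediate, so nothing is lost.) For the upper bound you take a genuinely different, and in fact cleaner, route: the paper does not invoke an exact minimizing geodesic but only the length-space property, choosing a Lipschitz curve with $L_d(\gamma)\leq d(x,y)+\varepsilon$, estimating $d_K(x,y)\leq (r+\varepsilon)K^{r+\varepsilon}(x)$, and then sending $\varepsilon\to 0$ --- which requires the additional verification that $r\mapsto K^r(x)$ is continuous (itself using properness through the uniform continuity of $K$ on compact balls). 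Your appeal to Theorem~\ref{exist_geod} to produce an exact geodesic with $L_d(\gamma)=d(x,y)$ is legitimate under \textbf{(H1)} (proper plus length space) and eliminates both the limit in $\varepsilon$ and the continuity argument for $K^r$; the trade-off is that you use properness already for the upper bound, whereas the paper's version of that inequality would survive in a non-proper length space provided one keeps the $\varepsilon$ and the continuity of $K^r$ by other means. Since the paper needs the continuity of $r\mapsto K^r(x)$ later anyway (Step 5 of Proposition~\ref{topo_dK}), neither route saves hypotheses in the end.
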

\begin{proof}
Set $r:=d(x,y)$. Since any curve $\gamma:x\mapsto y$ has to get out of the open ball $B:=B_d(x,r)$, it is clear that 
\[
L_K(\gamma)=\int_I K(\gamma(t))\, |\dot{\gamma}|(t)\d t \geq r \inf_B K=rK_r(x).
\] Taking the infimum over the set of curves $\gamma\in\mathcal{L}_{ploc}$ joining $x$ and $y$, one gets the first inequality. 

For the second inequality, let us fix $\varepsilon>0$. By construction, there exists a Lipschitz curve $\gamma:x\mapsto y$, that one can assume to be parametrized by arc-length, s.t. $L_d(\gamma)\leq r+\varepsilon$. In particular, $\operatorname{Im}(\gamma)$ is included in the ball $B_d(x,r+\varepsilon)$. Thus, one has 
\[
d_K(x,y)\leq L_K(\gamma)\leq (r+\varepsilon)\, K^{r+\varepsilon}(x)
\]
and the second inequality follows by sending $\varepsilon\to 0$. Indeed, the mapping $r\to K^r(x)$ is continuous on $[0,+\infty)$ since $K$ uniformly continuous on compact sets and since bounded closed subsets of $X$ are compact (assumption {\bf (H1)}).
\end{proof}

%

\begin{proof}[Proof of Proposition \ref{topo_dK}]
The proof is divided into six steps.

\textsc{Step 1: $d_K$ is a metric.}

First note that $d_K$ is finite on $X\times X$. Indeed, given  two points $x,y\in X$, just take a Lipschitz curve connecting them, and use $L_K(\gamma)\leq L_d(\gamma)\sup_{\operatorname{Im}(\gamma)} K<+\infty$. The triangle inequality for $d_K$ is a consequence of the stability of the set $\mathcal{L}_{ploc}$ by concatenation. The fact that $d_K(x,y)=0$ implies $x=y$ is an easy consequence of the finiteness of the set $\{K=0\}$. Indeed, if $x\neq y$, then any curve $\gamma:x\mapsto y$ has to connect $B_d(x,\varepsilon )$ to ${B_d}^c(x,2\varepsilon)$ for all $\varepsilon>0$ small enough. This implies that $L_K(\gamma)\geq \varepsilon\, \inf_C K$, where $C=\{y\;:\; \varepsilon\leq d(x,y)\leq 2\varepsilon\}$. But for $\varepsilon$ small enough, $C$ does not intersect the set $\{K=0\}$ so that $\inf_C K>0$. In particular, $d_K(x,y)\geq \varepsilon\, \inf_C K>0$.

\textsc{Step 2: $d_K$ and $d$ are equivalent on $d$-compact sets.}

Take $Y\subset X$ a compact set, and suppose $Y\subset B_d(x_0,R)$ just to fix the ideas. Consider the identity map from $(Y,d)$ to $(Y,d_K)$. It is an injective map between metric spaces. Moreover, it is continuous, since, as a consequence of Lemma \ref{est_dK}, we have $d_K\leq Cd$ on $Y\times Y$, where $C=\sup_{B_d(x_0,3R)} K<+\infty$ (note that the closed ball $B_d(x_0,3R)$ is $d$-compact, and that we supposed $d=\gd$ since $(X,d)$ is a length space). Hence, as every injective continuous map defined on a compact space is a homeomorphism, $d$ and $d_K$ are equivalent (on $Y$). 

\textsc{Step 3: every closed ball in $(X,d_K)$ is $d$-bounded}

This is a consequence of assumptions ${\bf (H1)}$ and ${\bf (H3)}$. Let us take $x_0,x\in X$ with $d_K(x,x_0)\leq R$. By definition, there exists $\gamma\in  \mathcal{L}_{ploc}(I,X)$ s.t. $\gamma :x_0\mapsto x$ and $L_K(\gamma)\leq d_K(x_0,x)+1$. Now, set $\phi(t):=d(\gamma(t),\Sigma)$: since the function $x\mapsto d(x,\Sigma)$ has Lipschitz constant equal to $1$, we have $\phi\in  \mathcal{L}_{ploc}(I,\R)$ and $|\phi'(t)|\leq |\gamma'(t)|$ a.e. Take $h:\R^+\to\R^+$ the antiderivative of $k$, i.e. $h'=k$ with $h(0)=0$, and compute $[h(\phi(t))]'=k(\phi(t))\phi'(t)$. Hence,
$$|[h(\phi(t))]'|=k(\phi(t))|\phi'(t)|\leq K(\gamma(t))\, |\gamma'(t)|$$
and $h(d(\gamma(t),\Sigma))\leq h(d(x_0,\Sigma))+L_K(\gamma)\leq h(d(x_0,\Sigma))+R+1.$ Since $\lim_{s\to\infty}h(s)=+\infty$, this provides a bound on $d(x,\Sigma)$ which means that the ball $B_{d_K}(x_0,R)$ is $d$-bounded.


\textsc{Step 4: every closed ball in $(X,d_K)$ is $d_K$-compact}

Now that we know that closed ball in $(X,d_K)$ are $d$-bounded, since $(X,d)$ is proper, we know that they are contained in $d$-compact sets. But on this sets $d$ and $d_K$ are equivalent, hence these balls are also $d$-closed, hence $d$-compact, and thus $d_K$-compact.


\textsc{Step 5: proof of statement 3.}
Let $\gamma:I\mapsto X$ be a $d$-locally Lipschitz curve valued in $X$. Thanks to the second inequality in Lemma \ref{est_dK}, $\gamma$ is also $d_K$-locally Lipschitz. Now, Lemma \ref{est_dK} provides
\[
K_{r}(\gamma(t))\ \frac{d(\gamma(t),\gamma(s))}{|t-s|}\leq \frac{d_K(\gamma(t),\gamma(s))}{|t-s|}\leq K^{r}(\gamma(t))\ \frac{d(\gamma(t),\gamma(s))}{|t-s|}
\]
with $r:=d(\gamma(t),\gamma(s))$.
In the limit $s\to t$ we get
\[
K(\gamma(t))\, |\dot{\gamma}|(t)\leq |\dot{\gamma}|_K(t)\leq K(\gamma(t))\, |\dot{\gamma}|(t)\ \text{ a.e.},
\]
where the continuity of $r\to K^r(x)$ and $r\to K_r(x)$ on $[0,+\infty)$ has been used.

\textsc{Last step: proof of statement 4.} This is an easy consequence of Statement 3. Indeed, by additivity of $L_K$ and $L_{d_K}$ and since $L_K(\gamma)=\sup L_K(\gamma_J)$,  $L_{d_K}(\gamma)=\sup L_{d_K}(\gamma_J)$, both supremum being taken on compact subsets $J\subset I$, it is enough to prove that $L_K(\gamma)=L_{d_K}(\gamma)$ when $\gamma\in \mathcal{L}(I,X)$. But any curve $\gamma\in \mathcal{L}(I,X)$ is locally $d_K$-Lipschitz and satisfies
\[
L_{d_K}(\gamma)=\int_I |\dot{\gamma}|_K(t)\d t=\int_I K(\gamma(t))\, |\dot{\gamma}|(t)\d t=L_K(\gamma).\qedhere
\]
\end{proof}

\section{Existence of heteroclinic connections}
\noindent Our aim is to investigate the existence of a global minimizer of the energy
\[
E_W(\gamma)=\int_\R \left(\frac{1}{2}|\dot{\gamma}|^2(t)+W(\gamma(t))\right)\d t ,
\]
defined over locally Lipschitz curves $\gamma: x^-\mapsto x^+$ valued in a metric space $(X,d)$. Here $W: X\mapsto\R^+$ is a continuous function, called \emph{potential} in all the sequel, and $x^\pm\in X$ are two wells, i.e. $W(x^\pm)=0$. Note that $W(x^\pm)=0$ is a necessary condition for the energy of $\gamma$ to be finite. The main result of this section is the following:
\begin{theorem}\label{heteroclinic}
Let $(X,d)$ be a metric space, $W:X\mapsto\R^+$ a continuous function and $x^-,x^+$ points of $X$ such that:
\begin{description}
\item[(H)]
$(X,d,K)$ satisfies hypotheses ${\bf H1-3}$ of the previous section, where $K:=\sqrt{2W}$.
\item[(STI)]
$W(x^-)=W(x^+)=0$ and $d_K$ (defined above) satisfies the following strict triangular inequality on the set $\{W=0\}$:  for all $x\in X\setminus\{x^-,x^+\}\,\text{ s.t. }W(x)=0$, $d_K(x^-,x^+)<d_K(x^-,x)+d_K(x,x^+)$.
\end{description}
Then, there exists a heteroclinic connection between $x^-$ and $x^+$, i.e. $\gamma\in \mathcal{L}(\R,X)$ such that
\[
E_W(\gamma)=\inf\{E_W(\sigma)\;:\; \sigma\in\mathcal{L}_{ploc}(\R,X),\, \sigma:x^-\mapsto x^+\}.
\]
Moreover, $E_W(\gamma)=d_K(x^-,x^+)$.
\end{theorem}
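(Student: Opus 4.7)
The plan is to prove $\min E_W = d_K(x^-, x^+)$ and exhibit a minimizer by combining Young's inequality with Theorem \ref{exist_Kgeod}. First, for any $\sigma \in \mathcal{L}_{ploc}(\R, X)$ joining $x^-$ to $x^+$, the pointwise estimate
$$\tfrac{1}{2}|\dot\sigma|^2(t) + W(\sigma(t)) \geq |\dot\sigma|(t)\sqrt{2W(\sigma(t))} = K(\sigma(t))\,|\dot\sigma|(t)$$
integrates to $E_W(\sigma) \geq L_K(\sigma) \geq d_K(x^-, x^+)$, with pointwise equality iff $|\dot\sigma|(t) = K(\sigma(t))$ almost everywhere. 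The task thus reduces to producing a curve $\sigma \in \mathcal{L}(\R, X)$ from $x^-$ to $x^+$ that realizes $L_K(\sigma) = d_K(x^-, x^+)$ and satisfies this equipartition condition.

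Second, I apply Theorem \ref{exist_Kgeod} to obtain a $K$-geodesic $\gamma$ between $x^-$ and $x^+$; after reparametrization I take it on $[0,L]$ with $L = d_K(x^-,x^+)$ so that $K(\gamma(t))\,|\dot\gamma|(t) = 1$ a.e.\ (i.e.\ $L_{d_K}$-arc length). The proof of that theorem already shows $\gamma$ is injective and meets $\{K=0\}$ at finitely many instants. Now \textbf{(STI)} forbids any interior crossing of $\{W=0\}$: if $\gamma(t) = x$ with $t \in (0,L)$, $W(x) = 0$, and $x \notin \{x^-, x^+\}$, then splitting $\gamma$ at $t$ yields $d_K(x^-, x) + d_K(x, x^+) \leq L_K(\gamma) = d_K(x^-, x^+)$, contradicting strictness; injectivity rules out revisiting $x^\pm$ in the interior. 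Hence $K \circ \gamma > 0$ on $(0, L)$.

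Third, I perform the reparametrization which turns Young's inequality into an equality. The function $\psi(t) := \int_{L/2}^t K(\gamma(u))^{-2}\,\d u$ is a strictly increasing $C^1$ bijection from $(0,L)$ onto some open interval $(s^-, s^+) \subseteq [-\infty, +\infty]$. I define $\sigma(s) := \gamma(\psi^{-1}(s))$ on $(s^-, s^+)$ and extend by $\sigma \equiv x^\pm$ on the complementary half-lines when $s^\pm$ is finite. A direct chain-rule computation, using $|\dot\gamma|(t) = 1/K(\gamma(t))$, gives $|\dot\sigma|(s) = K(\sigma(s))$ a.e. The extended curve belongs to $\mathcal{L}(\R, X)$ because $|\dot\sigma|$ is bounded by $\sup_{\gamma([0,L])} K < +\infty$ (the image $\gamma([0,L])$ being compact by properness of $(X, d_K)$ and equivalence of $d$ and $d_K$ on compacts, cf.\ Proposition \ref{topo_dK}), and $\sigma$ is continuous at each finite $s^\pm$ since $\psi^{-1}(s) \to 0$ or $L$ at the boundary.

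Finally, on $(s^-, s^+)$ the equipartition condition turns the $E_W$-integrand into $K(\sigma)\,|\dot\sigma|$, while on the constant tails both $W(x^\pm)=0$ and $|\dot\sigma|=0$; hence $E_W(\sigma) = L_K(\sigma) = L_K(\gamma) = d_K(x^-, x^+)$, matching the first-paragraph lower bound and making $\sigma$ a minimizer. The main obstacle I anticipate is the bookkeeping around the endpoints: the integral defining $\psi$ may converge or diverge at $t = 0, L$ depending on the local behaviour of $W$ near the wells, so one must allow both possibilities for $s^\pm$, glue on constant tails when needed, and verify that the resulting curve lies in $\mathcal{L}(\R, X)$ as claimed (and not merely in $\mathcal{L}_{ploc}(\R, X)$); the rest is Young's inequality and invariance of $L_K$ under reparametrization.
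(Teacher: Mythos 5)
Your proposal is correct and follows essentially the same route as the paper: Young's inequality gives $E_W\geq L_K$, Theorem \ref{exist_Kgeod} produces a $K$-geodesic, \textbf{(STI)} together with injectivity excludes interior zeros of $K\circ\gamma$, and a reparametrization enforces the equipartition $|\dot\gamma|=K\circ\gamma$. The only (harmless) difference is that you build the reparametrization explicitly as the inverse of $\psi(t)=\int_{L/2}^{t}K(\gamma(u))^{-2}\,\d u$ starting from the $d_K$-arc-length parametrization, whereas the paper passes to $d$-arc length and invokes the Peano--Arzel\`a theorem for the ODE $\varphi'=K(\gamma_0(\varphi))$; these are two presentations of the same change of variables, and your endpoint bookkeeping (finite versus infinite $s^\pm$, constant tails) matches the paper's analysis of the maximal solution.
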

\begin{proof}
This theorem is a consequence of Theorem \ref{exist_Kgeod} and the following consequence of Young's inequality:
\begin{equation}\label{young}
\text{for all } \gamma\in\mathcal{L}_{ploc}(\R,X),\ E_W(\gamma)\geq L_K(\gamma),
\end{equation}
where $K:=\sqrt{2W}$. Indeed, thanks to assumption {\bf (H)}, Theorem \ref{exist_Kgeod} provides a $L_K$-minimizing curve $\gamma_0 : I\to X$, that one can assume to be injective and parametrized by $L_K$-arc length, connecting $x^-$ to $x^+$. Thanks to assumption ${\bf (STI)}$, it is clear that the curve $\gamma_0$ cannot meet the set $\{W=0\}$ at a third point $x\neq x^\pm$: in other words $K(\gamma(t))>0$ on the interior of $I$. Thus, 
$\gamma_0$ is also $d$-locally Lipschitz on $I$ (and not only piecewise locally Lipschitz). In particular, one can reparametrize the curve $\gamma_0$ by $L_d$-arc length, so that $|\dot{\gamma_0}|=1$ a.e.

Then, in view of \eqref{young}, it is enough to prove that $\gamma_0$ can be reparametrized in a curve $\gamma$ satisfying $|\dot{\gamma}|=K\circ\gamma$ a.e., so that \eqref{young} becomes an equality. By the way, this automatically implies that $\gamma$ is Lipschitz, since it provides a bound on $|\gamma'|$. Namely, we look for an admissible curve $\gamma:\R\to X$ of the form $\gamma(t)=\gamma_0(\varphi(t))$, where $\varphi:\R\to I$ is $\mathcal{C}^1$, increasing and surjective. For $\gamma$ to satisfy the equipartition condition, i.e. $|\dot{\gamma}|(t)=K(\gamma(t))$ a.e., we need $\varphi$ to solve the ODE
\begin{equation}\label{ode_geod}
\varphi'(t)=F(\varphi(t)),
\end{equation}
where $F:\overline{I}\to\R$ is the continuous function defined by $F=K\circ\gamma_0$ on $I$ and $F\equiv 0$ outside $I $. Thanks to the Peano-Arzel\`a theorem, \eqref{ode_geod} admits at least one maximal solution $\varphi_0:J=(t^-,t^+)\mapsto \R$ such that $0\in J$ and $\varphi_0(0)$ is any point inside $I$. Since $F$ vanishes out of $I$, we know that $\operatorname{Im}(\varphi_0)\subset \overline{I}$. Moreover, since $\varphi_0$ is non decreasing on $I$, it converges to two distinct stationary points of the preceding ODE. As $F>0$ inside $I$, one has $\lim_{t\to t^+} \varphi_0(t)=\sup I$ and $\lim_{t\to t^-} \varphi_0(t)=\inf I$. We deduce that $\varphi_0$ is an entire solution of the preceding ODE, i.e. $I=\R$. Indeed, if $I\neq \R$, say $t^+<+\infty$, then one could extend $\varphi_0$ by setting $\varphi_0(t)=\sup I$ for $t>t^+$. Finally, the curve $\gamma:=\gamma_0\circ\varphi_0$ satisfies $\gamma(\pm\infty)=x^\pm$, $|\dot{\gamma}|(t)=K(\gamma(t))$ a.e. and so
\[
E_W(\gamma)=L_K(\gamma)=L_K(\gamma_0)=d_K(x^-,x^+)\leq\inf\{E_W(\sigma)\;:\; \sigma\in\mathcal{L}_{ploc}(\R,X),\,\gamma:x\mapsto y\}.
\]
Thus, $\gamma$ minimizes $E_W$ over all admissible connections between $x^-$ and $x^+$. 
\end{proof}
\begin{remark*}
\begin{itemize}
\item
It is easy to see that the equirepartition of the energy, that is the identity $|\dot{\gamma}|^2(t)=2W(\gamma(t))$, is a necessary condition for critical points of $E_W$.
\item
The assumption {\bf (STI)} is not optimal but cannot be removed, and is quite standard in the literature. Without this assumption, it could happen that a geodesic $\gamma$ would meet the set $\{W=0\}$ at a third point $x\neq x^\pm$. In this case, it is not possible to parametrize $\gamma$ in such a way that $|\dot{\gamma}|(t)=K(\gamma(t))$.
\item
However, if $K=\sqrt{2W}$ is not Lipschitz, it is possible that there exists a heteroclitic connection $\gamma:x^-\mapsto x^+$ meeting $\{W=0\}$ at a third point $x\neq x^\pm$. Indeed, if $\liminf_{y\to x}K(y)/|y|>0$, then, there exists a heteroclinic connection $\gamma^-:x^-\mapsto x$ which reaches $x$ in finite time (say, $\gamma^-(t)=x$ for $t\geq 0$). Similarly, there exists a heteroclinic connection $\gamma^+:x\mapsto x^+$ such that $\gamma^+(t)=z$ for $t\leq 0$. Thus, there exists a heteroclinic connection between $x^-$ and $x^+$ obtained by matching $\gamma^-$ and $\gamma^+$.
\end{itemize}
\end{remark*}

\end{document}